\newtheorem{lem}{Lemma}[section]
\newtheorem{thm}[lem]{Theorem}
\journal{Journal}
\begin{document}
	
	\begin{frontmatter}
		
		
		
		\title{Stochastic Finite Volume Approximation with Clustering in the Parameter Space for the Forward Uncertainty Quantification of Differential Equations with Random Parameters}
		
		
		\address[1]{Research Centre for Mathematics and Interdisciplinary Sciences, Shandong University, Qingdao, Shandong Province, 266237, China}
		\address[2]{School of Mathematics and Statistics, Changsha University of Science and Technology, Changsha, Hunan province, 410114, China }
		\address[3]{Frontiers Science Center for Nonlinear Expectations, Minister of Education, Shandong University, Qingdao, Shandong Province, 266237, China}
		\address[4]{Shandong Province Key Laboratory of Financial Risk, Qingdao, Shandong Province, 266237, China}
		\address[5]{Suzhou Research Institute of Shandong University, Suzhou, Jiangsu Province, 215123, China}

		\author[1,3,4,5]{Zhao Zhang\corref{cor1}}
		\ead{zhaozhang@sdu.edu.cn}
		\author[1,3]{Mengyao Xia\corref{cor1}}
		\ead{xmysd327@mail.sdu.edu.cn}
		\author[2]{Na Ou\corref{cor1}}
		\ead{oyoungla@csust.edu.cn}
		\cortext[cor1]{Corresponding author}

		\begin{abstract}
			The uncertainty quantification (UQ) for mathematical models with random parameters is important for many science and engineering problems. Forward UQ quantifies the impact of random parameters on the output of system. In the current study, we propose a new stochastic finite volume (SFV) scheme by combining SFV with clustering algorithm in the parameter space such that each cluster can be regarded as a finite volume with implicit boundaries. The advantage of SFV is that no specific form of the random variable is required such that discontinuous solutions and sharp interfaces can be accurately approximated. Compared to classic SFV based on structured grids, the new SFV-cluster scheme extends SFV to parameter spaces of higher dimensions. For demonstration and validation, we present the construction of SFV schemes for the Kraichnan-Orszag three-mode problem and the Buckley-Leverett equation. The error analysis of SFV is extended and the new algorithm is validated by numerical experiments.
		\end{abstract}
		
		\begin{keyword}
			uncertainty quantification \sep stochastic finite volume  \sep clustering
			
			
		\end{keyword}
		
	\end{frontmatter}
	
	
	\section{Introduction}
	\label{intro}
	The uncertainty quantification (UQ) of parametric differential equations approximating physical processes with random parameters is important for the accurate and reliable predictions of many physical problems \citep{smith2024uncertainty}. UQ mainly consists of two aspects. The first aspect is forward UQ which concerns about the propagation of uncertainty from the input random parameters to the output solutions. The other aspect is inverse UQ which concerns the approximation of posterior uncertainty of random parameters given the observed data of model outputs.
	The two aspects of UQ are in fact closely connected through the Bayes' rule, since the likelihood function can be approximated using a surrogate built by forward UQ such that the posterior density of random parameters can be efficiently evaluated.
	
	There have been many studies in the area of forward UQ. The stochastic Garlerkin method \citep{lord2014, Newsum2017} is an intrusive UQ method where the numerical scheme for solving the governing equations of the physical process need to be modified to a large extent. The resulting linear equation system is very large even for moderate-dimensional problems coupling the physical and parameter spaces spanned by the basis functions. On the other hand, the stochastic collocation method \citep{Xiu2010, Yan2017,yang2021stochastic} is a non-intrusive UQ method where the forward model can be regarded as a black-box and polynomial chaos expansion is adopted for interpolating the parameter space. The two spectral methods reply on smooth polynomials and are hence difficult to approximate discontinuous solutions or sharp interfaces. To alleviate this restriction, there have been many studies such as the multi-element polynomial chaos, adaptive multiscale finite element, and filtering methods \citep{le2004multi, Wan2006, Witteveen2008, Foo2010, Zhong2022, Vauchel2023, Xiao2023}.
	
	The stochastic finite volume (SFV) method proposed firstly in \citet{abgrall2007simple} approximate solutions by building grids in the paramter space. SFV only requires moderate modification of the numerical scheme for deterministic problems, and is also referred to a semi-intrusive UQ method \citep{Jin2017}. The advantage of SFV is that no specific form of the random variable is required such that discontinuous solutions and sharp interfaces can be accurately approximated. However, for even moderate-dimensional random paramters, the computational cost of structured-grid-based SFV becomes prohibitively high as the number of grid cells as finite volumes grows exponentially with the number of dimensions of the parameter space \citep{Abgrall2014,Geraci2016}. Building unstructured grids seem to be a remedy for the problem, but there is yet no algorithm for building unstructured grids in parameter spaces with dimension larger than three.
	
	In the current study, we propose a new SFV-cluster scheme where grid cells in the parameter space are built by clustering samples, such that the clusters can be regarded as unstructured grid cells with implicit boundaries. The conditional expectation in each cell treated as a finite volume is approximated using the samples in the cell. The approximation error of the new scheme is discussed and the new algorithm is validated on test cases of the Kraichnan-Orszag three-mode problem and the Buckley-Leverett example.
	
	The paper is organised as follows. Section 2 presents the development of SFV method. Section 3 presents the clustering algorithm in the parameter space. Section 4 shows the error analysis. Section 5 presents the test case. Section 6 is the conclusion of the current study. 
	
	\section{The Stochastic Finite Volume Scheme}
	\subsection{Basic Idea of SFV}
	Consider a general parametric governing equation
	\begin{equation}
		\mathcal{L}(u, \textbf{y})=0~,
		\label{general}
	\end{equation}
	where $\mathcal{L}$ is a operator containing initial and boundary conditions, $u=u(\textbf{x},t,\textbf{y})$ is the dependent variable, $\textbf{x}$ and $t$ are spatial and temporal coordinates, $\textbf{y}$ is a random parameter that might be dependent on $\textbf{x}$ and $t$ in some problems. For deterministic equations, the idea of finite volume (FV) method is to integrate the governing equation on each cell in the physical space. For parametric problems, the basic idea of SFV is to discretize the physical space as well as the parameter space. A set $\{\omega_j\}, j=1,...,N$ is built such that for probability measure $P$, both $P(\omega_i \cap \omega_j)=0$ $\forall i\neq j$ and $\Omega=\cup_{j=1}^{N}\omega_j$ hold. Then, conditional expectation $E(u_h|\omega_j)$ is used to approximate the solution of parametric equations \citep{abgrall2007simple}, where $u_h$ is the numerical solution in the physical space, and $E(u_h|\omega_j)$ is defined as 
	\begin{equation}
		E(u_h|\omega_j)=\frac{\int_{\omega_j}u_h dP}{\int_{\omega_j}dP}~.
	\end{equation}
	
	To compute $E(u_h|\omega_j)$, the governing parametric equation needs to be integrated on the cells both in the physical and paramter spaces. \citet{Jin2017} presents the derivation of SFV scheme for a basic hyperbolic conservation law. In this section, we present the construction of SFV schemes for two benchmark models used in the current study, the Kraichnan-Orszag three-mode problem and Buckley-Leverett example.
	
	\subsection{SFV scheme for Kraichnan-Orszag three-mode problem}
	The Kraichnan-Orszag three-mode problem is defined by the system of nonlinear ordinary differential equations
	\begin{equation}
		\frac{dz_1}{dt} = z_1 z_3 \quad \frac{dz_2}{dt} = -z_2 z_3 \quad \frac{dz_3}{dt} = -z_1^2 + z_2^2
		\label{K-O}
	\end{equation}
	with stochastic initial conditions
	\begin{align}
		z_1(0) = z_1(0;y) \quad z_2(0) = z_2(0;y) \quad z_3(0) = z_3(0;y)
	\end{align}
	where $y$ represents the random parameter , and $z_i = z_i(t, y)$ $(i = 1, 2, 3)$ are the stochastic solution components.
	Integrating Eq.~\eqref{K-O} over cells treated as finite volumes in both time and parameter spaces yields  
	\begin{equation}
		\int_{\omega_k} \int_{\Delta t^n} \frac{dz_i}{dt} \rho(y) \, dt \, dy = \int_{\omega_k} \int_{\Delta t^n} f_i(z_1, z_2, z_3) \rho(y) \, dt \, dy~, 
	\end{equation}
	for $i=1, 2, 3$, where $\Delta t^n$ and $\omega_j$ are the cells in the time and parameter spaces, respectively, $\rho(y)$ is the probability density function of $y$, and $f_i$ represents the right-hand side of Eq.~\eqref{K-O} such that
	\begin{equation}
		f_1 = z_1 z_3, \quad f_2 = -z_2 z_3, \quad f_3 = -z_1^2 + z_2^2~. 
	\end{equation}
	
	Adopting the forward Euler scheme in time leads to
	\begin{equation}
		\int_{\omega_k} \bigl(z_i(t_{n+1},y) - z_i(t_n,y)\bigr) \rho(y)\,dy
		= \Delta t \int_{\omega_k} f_i\bigl(z_1(t_{n},y),z_2(t_{n},y),z_3(t_{n},y)\bigr) \rho(y)\,dy~.
	\end{equation}
	A quadrature rule can be adopted to approximate integration on $\omega_k$ as
	\begin{eqnarray}
		& \sum_{m} \left[ {z_i}(t_{n+1}, y_{m}) - {z_i}(t_n, y_{m}) \right] \rho( y_{m})\beta_{m} \\
		= &\Delta t_n \sum_{m} f_i(z_1(t_n, y_{m}),z_2(t_n, y_{m}),z_3(t_n, y_{m}))\rho( y_{m}) \beta_{m}
		\label{ex1eq}
	\end{eqnarray}
	where $\beta_{m}$ denote the weights in $\omega_k$. Further, assume the solution is piecewise constant on the cells in the parameter space such that the governing equations can be approximated as
	\begin{equation}
		(\bar{z}_{i,k}^{n+1}-\bar{z}_{i,k}^{n})P(\omega_k) = \Delta t_n\sum_{m=1}^M f_i(z_1(t_n, y_{m}),z_2(t_n, y_{m}),z_3(t_n, y_{m})) \rho(y_{m}) \beta_{m}~,\label{ge1}
	\end{equation}
	where $\bar{z}_{i,j}^{n}$ is the mean ${z_i}(t_{n}, y)$ on the $j'$th cell $\omega_j$, and $P(\omega_k)=\sum_{m} \rho( y_{m})\beta_{m}$ is the approximated probability measure of $\omega_k$.
	
	
	\subsection{SFV scheme for the hyperbolic Buckley-Leverett equation}
	The governing equation for the hyperbolic Buckley-Leverett benchmark with random parameter $y$ is
	\begin{equation}
		\frac{\partial u}{\partial t} + \frac{\partial f(u;y)}{\partial x} = 0, \quad x \in (0, L)
		\label{buckley_leverett}
	\end{equation}
	where $u=u(x,t;y)$ and 
	\begin{equation}
		f(u;y) = \frac{u^2}{u^2 + \alpha(y)(1-u)^2}~.
		\label{flux_function}
	\end{equation}
	The initial condition is
	\begin{equation}
		u(x, 0) = u_0(x) =
		\begin{cases} 
			u_L, & \text{if } x < x_0, \\
			u_R, & \text{if } x > x_0.
		\end{cases}
		\label{Riemann initial data}
	\end{equation}
	Integrating Eq.~\eqref{buckley_leverett} over cells in both physical and parameter spaces yields
	\begin{equation}
		\int_{\Delta x} \int_{\omega_k} \frac{\partial u}{\partial t} \rho(y) \, dy \, dx + \int_{\Delta x} \int_{\omega_k} \frac{\partial f(u;y)}{\partial x} \rho(y) \, dy \, dx = 0.
		\label{buckley_leverett_integrate}
	\end{equation}
	where \(\Delta x_i = [x_{i-1/2}, x_{i+1/2}] \) is a physical space cell.  Let $\bar{u}_{i,k}^n$ denote the mean $u$ in the $i$'th physical cell and $k$'th parameter cell at the $n$'th time step, adopting the forward Euler scheme in time, the SFV scheme is
	\begin{equation}
		|\Delta x_i| P(\omega_k) \frac{\bar{u}_{i,k}^{n+1} - \bar{u}_{i,k}^n}{\Delta t}+ \int_{\omega_k} \left[ f(u(x_{i+1/2},t_n,y);y) - f(u(x_{i-1/2},t_n,y);y) \right] \rho(y) \, dy = 0.
		\label{eq:cell-average-evolution}
	\end{equation}
	where $|\Delta x|$ is the volume of $\Delta x$, $P(\omega_k) = \int_{\omega_k} \rho(y) \, dy$ is the probability measure of $\omega_k$. Adopting the fourth-order Runge–Kutta method (RK4) , the SFV scheme becomes
	\begin{equation}
		\begin{aligned}
			\bar{u}_{i,k}^{\,n+1}
			&= \bar{u}_{i,k}^{\,n}
			+ \frac{\Delta t}{6}\Bigg[
			\mathcal{L}_{i,k}\!\left(\bar{u}^{\,n}\right)
			+ 2\mathcal{L}_{i,k}\!\left(\bar{u}^{\,n}+\frac{\Delta t}{2}\mathcal{L}_{i,k}(\bar{u}^{\,n})\right) \\
			&\quad + 2\mathcal{L}_{i,k}\!\left(
			\bar{u}^{\,n}+\frac{\Delta t}{2}\mathcal{L}_{i,k}\!\left(
			\bar{u}^{\,n}+\frac{\Delta t}{2}\mathcal{L}_{i,k}(\bar{u}^{\,n})
			\right)
			\right) \\
			&\quad + \mathcal{L}_{i,k}\!\left(
			\bar{u}^{\,n}+\Delta t\,\mathcal{L}_{i,k}\!\left(
			\bar{u}^{\,n}+\frac{\Delta t}{2}\mathcal{L}_{i,k}\!\left(
			\bar{u}^{\,n}+\frac{\Delta t}{2}\mathcal{L}_{i,k}(\bar{u}^{\,n})
			\right)
			\right)
			\right)
			\Bigg],
		\end{aligned}
		\label{eq:sfv-rk4}
	\end{equation}
	where  \(\mathcal{L}_{i,k}\) is defined as
	\[
	\mathcal{L}_{i,k}(u)
	= -\frac{1}{|\Delta x_i|\,P(\omega_k)}
	\int_{\omega_k}
	\left[ f\big(u(x_{i+1/2},t,y);y)
	- f\big(u(x_{i-1/2},t,y);y) \right]
	\rho(y)\,\mathrm{d}y.
	\]
	The numerical flux $f(u(x_{i+1/2},t_n,y);y)$ at the interface of two physical cells can be approximated by the upwind scheme. For SFV, a quadrature rule is adopted to approximate the integration over $\omega_k$ in Eq.~\eqref{eq:cell-average-evolution}. Using quadrature nodes $y_m$ and weights $\beta_m$ such that $\sum_m \rho( y_{m})\beta_m = P(\omega_k)$, Eq.~\eqref{eq:cell-average-evolution} can be approximated as
	\begin{equation}
		|\Delta x_i| P(\omega_k) \frac{\bar{u}_{i,k}^{n+1} - \bar{u}_{i,k}^n}{\Delta t}+ \sum_{m} \left[ f(u(x_{i+1/2},t_n,y_m);y_m) - f(u(x_{i-1/2},t_n,y_m);y_m) \right] \rho( y_{m})\beta_m = 0.
	\end{equation}
	Let
	\begin{equation}
		F_{i+1/2,k} = \sum_{m} f(u(x_{i+1/2},t_n,y_m);y_m)\rho( y_{m})\beta_m, \quad F_{i-1/2,k} = \sum_{m} f(u(x_{i-1/2},t_n,y_m);y_m)\rho( y_{m})\beta_m~,
		\label{eq:flux-quadrature}
	\end{equation}
	we have the discrete scheme adopting forward Euler as
	\begin{equation}
		|\Delta x_i| P(\omega_k) \frac{u_{i,k}^{n+1} - u_{i,k}^n}{\Delta t} + F_{i+1/2,k}^n - F_{i-1/2,k}^n = 0.
		\label{eq:continous}
	\end{equation}
	The discrete scheme using the RK4 method is given by
	\begin{equation}
		\bar{u}_{i,k}^{\,n+1}
		= \bar{u}_{i,k}^{\,n}
		+ \frac{\Delta t}{6}\left(K_{i,k}^{(1)}+2K_{i,k}^{(2)}
		+2K_{i,k}^{(3)}+K_{i,k}^{(4)}\right),
		\label{eq:rk4-update}
	\end{equation}
	where the stage residuals are defined as
	\begin{alignat}{2}
		K_{i,k}^{(1)} &= \mathcal{L}_{i,k}\!\left(\bar{u}^{\,n}\right), &\qquad
		K_{i,k}^{(2)} &= \mathcal{L}_{i,k}\!\left(\bar{u}^{\,n}+\tfrac{\Delta t}{2}K^{(1)}\right), \notag\\
		K_{i,k}^{(3)} &= \mathcal{L}_{i,k}\!\left(\bar{u}^{\,n}+\tfrac{\Delta t}{2}K^{(2)}\right), &
		K_{i,k}^{(4)} &= \mathcal{L}_{i,k}\!\left(\bar{u}^{\,n}+\Delta t\,K^{(3)}\right),
		\label{eq:rk4-stages}
	\end{alignat}
	where \(\mathcal{L}_{i,k}\) is defined as
	\begin{equation*}
		\mathcal{L}_{i,k}(u) = -\frac{1}{|\Delta x_i|\,P(\omega_k)}
		\left( F_{i+1/2,k} - F_{i-1/2,k} \right).
	\end{equation*}
	
	\section{SFV with Clustering in the Parameter Space}
	The SFV method is based on the discretisation of physical and parameter spaces. The existing SFV schemes employ structured grids in the parameter space, while the adaptive SFV approach allows structured grids with anisotropic resolution. For structured grids, the number of cells in the parameter space grows exponentially with the space dimension. This restricts the application of SFV to problems involving parameter spaces of dimensions higher than three. An obvious remedy for this problem is to build unstructured meshes in the parameter space. However, there is no algorithm yet for generating unstructured meshes in spaces with dimension higher than three.
	
	An important feature of the SFV schemes in Eq.~\eqref{ge1} and \eqref{eq:continous} is that the cells in the parameter space are decoupled, indicating that there is no flux across the boundary faces between them. The quadrature rule is based on the nodes inside each cell, while nodes in the parameter space are actually samples. Therefore, we can approximate the parameter space by a set of samples, and then cluster the samples into different clusters such that each cluster can be regarded as a cell with implicit boundaries. This allows us to generate unstructured meshes in spaces of dimension higher than three conveniently. 
	Hereafter, the new scheme combining SFV with clustering in the parameter space is referred to as SFV-cluster, while the vanilla SFV scheme based on structured grid is referred to as SFV.
	
	Suppose a number of $N$ samples are generated by the Monte Carlo method, and cell $\omega_j$ contains $M$ samples denoted by $y_m$, then we have $\rho(y_m)=1/N$ and $P(\omega_k)=M/N$. Further, let $w_m=1$ for simplicity, The SFV discrete Eq.~\eqref{ex1eq} for Kraichnan-Orszag three-mode problem becomes
	\begin{equation}
		(\bar{z}_{i,k}^{n+1}-\bar{z}_{i,k}^{n})M = \Delta t_n\sum_{m=1}^M f_i(z_1(t_n, y_{m}),z_2(t_n, y_{m}),z_3(t_n, y_{m}))~.\label{ge1}
	\end{equation}
	The flux approximation in Eq.~\eqref{eq:flux-quadrature} for the hyperbolic Buckley-Leverett problem becomes
	\begin{equation}
		F_{i+1/2,j} = \frac{1}{N}\sum_{y_m\in\omega_j} f(u(x_{i+1/2},t_n,y_m);y_m), \quad F_{i-1/2,j} = \frac{1}{N}\sum_{y_m\in\omega_j} f(u(x_{i-1/2},t_n,y_m);y_m)~.
		\label{eq:flux-quadrature2}
	\end{equation}
	
	The computational cost of SFV is proportional to the number of cells , rather than the number of samples in the parameter space. Clustering is a typical unsupervised machine learning method that aims to group similar samples together based on their intrinsic characteristics without 
	labels. There have been different methods for clustering \citep{xu2005survey,xu2015comprehensive}. Partitioning methods, such as K‑means and K‑medoids, divide samples into a predetermined number of clusters by optimization. Hierarchical methods create a tree‑like structure of clusters, either agglomeratively or divisively. Model‑based methods, such as the Gaussian Mixture Model, assume that the samples are generated from a mixture of probability distributions and provide soft cluster assignments. We only mention some clustering methods here, and there are other methods in literature.
	
	The K‑means method is one of the most widely used clustering algorithms.  
	The basic idea of K‑means is to partition n samples into $K$ clusters, where each sampple belongs to the cluster whose centroid  is closest. The algorithm seeks to minimize the objective as  
	\[
	\sum_{k=1}^{K} \sum_{y_m \in C_k} \|y_m - \mu_k\|^2,
	\]
	where \(C_k\) denotes the \(k\)-th cluster, $y_m$ is a sample and \(\mu_i\) is the centroid of \(C_k\). As minimizing this objective exactly is NP‑hard, K‑means employs an iterative approach that converges to a local minimum.
	The standard K‑means algorithm proceeds as follows.
	\begin{enumerate}
		\item Initialization: Choose \(K\) initial centroids, which can be selected randomly from the dataset. 
		\item Assignment: For each sample, compute its Euclidean distance to every centroid and assign the sample to the cluster whose centroid is the closest. This forms \(K\) clusters for the current iteration.
		\item Update: For each cluster, recalculate its centroid as the mean of all samples currently assigned to the cluster. 
		\item Iteration: Repeat the assignment and update steps until a termination condition is met—typically when the centroid positions change negligibly or after a fixed number of iterations.
	\end{enumerate}
	
	Since the K-means clustering result is affected by the initial centroids, it is common practice to run the algorithm multiple times with different initializations and then keep the result associated with the lowest objective.
	
	\section{Error analysis}
	The analysis here for SFV-Cluster is based on the error analysis in \citet{Jin2017} for SFV employing structured grid. 
	Assume \( p=p(x, t; y) \) is the exact solution and \( p_{i, k}^{n} \) is the approximated solution at $t=t_{n}$ using SFV. We denote it as \( p_{h}=\left\{p_{i k}^{n}\right\} \). Let the mean and variance of the exact solution at the point $(x_i, t_n)$ be
	\[
	\begin{aligned}
		\mathbb{E}[p]\left(x_{i}, t^{n}\right)&=\int_{\Omega} p\left(x_{i}, t^{n} ; y\right) \rho(y) dy,   \\
		\mathbb{V}[p]\left(x_{i}, t^{n}\right)&=\mathbb{E}\left[p^{2}\left(x_{i}, t^{n}\right)\right]-\left(\mathbb{E}[p]\left(x_{i}, t^{n}\right)\right)^{2},
	\end{aligned}
	\]
	respectively. Denote 
	\[
	\begin{aligned}
		\mathbb{E}[p_h]_{i}^{n}&=\int_{\Omega} \sum_{k}^{N_y}p_{i k}^{n} \rho(y) dy,   \\
		\mathbb{V}[p_h]_{i}^{n}&=\mathbb{E}\left[p_{h}^{2}\right]_{i}^{n}-\left(\mathbb{E}\left[p_{h}\right]_{i}^{n}\right)^{2},
	\end{aligned}
	\]
	the corresponding SFV-Cluster approximation is computed as
	\[
	\begin{aligned}
		E_{h}\left[p_{h}\right]_{i}^{n}&=\sum_{k=1}^{N_{y}} p_{i k}^{n} {\tilde \alpha_k},\\ 
		V_{h}\left[p_{h}\right]_{i}^{n}&=E_{h}\left[p_{h}^{2}\right]_{i}^{n}-\left(E_{h}\left[p_{h}\right]_{i}^{n}\right)^{2},
	\end{aligned}
	\]
	where ${\tilde\alpha_k}$ is the numerical approximation of $\alpha_{k}$ and 
	\[
	\begin{aligned}
		\alpha_{k}&=\int_{\omega_k} \rho(y) dy,  \\
		{\tilde\alpha_k}&=\frac{1}{N}\sum_{n=1}^N {\mathds 1}(y_n \in \omega_k)~.
	\end{aligned}.
	\]
	According to the Monte Carlo approximation, the error between ${\tilde\alpha_k}$ and $\alpha_{k}$ is
	\[
	\left|\alpha_{k}-{\tilde\alpha_k}\right| \lesssim O(\frac{\sigma_k}{\sqrt{N}}),
	\]
	where $\sigma_k$ is the standard variance of the random variable ${\mathds{1}(y\in \omega_k)}$. We use the moment estimator to estimate the value of $\sigma_k$. Denote the random variable $Z={\mathds{1}(y\in \omega_k)}$, the zero-order moment estimate is
	\[
	\bar z=\frac{m_k}{N},
	\]
	where $m_k$ is the number of samples belonging to the $k-$th cluster. We can then have the second-order moment estimate
	\[
	\begin{aligned}
		s_k^2&=\frac{1}{N-1}\sum_{i=1}^N (z_i-\bar z)^2\\
		&=\frac{1}{N-1}\left(\sum_{i=1}^{m_k} \left(1-\frac{m_k}{N}\right)^2+ \sum_{i=1}^{N-m_k}\left(0-\frac{m_k}{N}\right)^2 \right)\\
		&=\frac{(N-m_k)m_k}{N(N-1)},
	\end{aligned}
	\]
	which is an unbiased estimate of $\sigma_k^2$. When $m_k <N/2$, the less points contained in the $k-$th cluster, the smaller the estimated $\sigma_k$ is.

	\subsection{Estimates in $L_{\infty}$ -norm}
	Assume
	\[
	\left\|p-p_{h}\right\|_{L^{\infty}(D \times \Omega)} \leqslant C_{1} \Delta x^{q}+C_{2} \Delta y^{r},
	\]
	where $\Delta x$ is the mesh size in the physical domain, $\Delta y$ can be viewed as the largest radius of the clusters, which is used to measure the discretization error in the parameter space.
	
	\begin{thm}
		The error of the expectation and variance are
		\[
		\begin{aligned}
			\left\|\mathbb{E}[p]-E_{h}\left[p_{h}\right]\right\|_{L^{\infty}(D)} &\leqslant C_{1} \Delta x^{q}+C_{2} \Delta y^{r}+C_3\frac{\sigma N_y}{\sqrt{N}},\\
			\left\|\mathbb{V}[p]-V_{h}\left[p_{h}\right]\right\|_{L^{\infty}(D)} &\leqslant C_1^{'} \Delta x^{q}+C_2^{'} \Delta y^{r}+C_3^{'}\frac{\sigma N_y}{\sqrt{N}}.
			\label{thm1}
		\end{aligned}
		\]
		where $\sigma=\max{(\sigma_1,\cdots, \sigma_k,\cdots)}$ and $N_y$ is the number of clusters.
	\end{thm}
	\begin{proof}
		\[
		\begin{aligned}
			\left|\mathbb{E}[p](x_i, t^n)-E_{h}\left[p_{h}\right]_i^n\right|&=\left| \int_{Y} p(x_i, t^n;y)\rho(y)dy- \sum_k p_{i k}^{n} {\tilde \alpha_k} \right|\\
			&\leqslant \left| \int_{Y} p(x_i, t^n;y)\rho(y)dy- \sum_k p_{i k}^{n} {\alpha_k} \right|+\left|\sum_k p_{i k}^{n} ({\alpha_k}-{\tilde \alpha_k}) \right|\\
			&\lesssim \sup_{\omega_k}\left| p(x_i, t^n;y)-p_{ik}^n \right|+\frac{\sigma N_y}{\sqrt{N}}\sup_{\omega_k}\left|p_{ik}^n\right|,
		\end{aligned}
		\]
		and 
		\[
		\begin{aligned}
			\left\|\mathbb{E}[p]-E_{h}\left[p_{h}\right]\right\|_{L^{\infty}(D)} &\lesssim\left\|p-p_{h}\right\|_{L^{\infty}(D \times \Omega)}+\frac{\sigma N_y}{\sqrt{N}}\left\|p_{h}\right\|_{L^{\infty}(D \times \Omega)} \\
			&\leqslant C_{1} \Delta x^{q}+C_{2} \Delta y^{r}+C_3\frac{\sigma N_y}{\sqrt{N}}.
		\end{aligned}
		\]
		For the variance, we have
		\[
		\left\|\mathbb{V}[p]-V_{h}\left[p_{h}\right]\right\|_{L^{\infty}(D)} \leqslant \left\|\mathbb{E}\left[p^{2}\right]-E_{h}\left[p_{h}^{2}\right]\right\|_{L^{\infty}(D)}+\left\|(\mathbb{E}[p])^{2}-\left(E_{h}\left[p_{h}\right]\right)^{2}\right\|_{L^{\infty}(D)}
		\]
		and the first term can be estimated as
		\[
		\begin{aligned}
			\mathrm{E}\left[p^{2}\left(x_{i}, t^{n}\right)\right]-E_{h}\left[p_{h}^{2}\right]_{i}^{n}|&=\left| \int_{\Omega} p^{2}\left(x_{i}, t^{n} ; y\right) \rho(y) d y-\sum_{k}\left(p_{i k}^{n}\right)^{2} {\tilde\alpha_k} \right| \\
			&\leqslant\left|\sum_{k} \int_{\omega_{k}} p^{2}\left(x_{i}, t^{n} ; y\right) \rho(y) d y-\sum_{k}\left(p_{i k}^{n}\right)^{2} \int_{\omega_{k}} \rho(y) d y\right|\\
			&+\left|\sum_{k}\left(p_{i k}^{n}\right)^{2} \left(\int_{\omega_{k}} \rho(y) d y -{\tilde\alpha_k}\right) \right| \\
			&\lesssim C \sup _{\omega_{k}}\left|p\left(x_{i}, t^{n} ; y\right)-p_{i k}^{n}\right|+\sup_{\omega_k} \left|p_{ik}^n\right|^2\frac{\sigma}{\sqrt{N}} N_y~.
		\end{aligned}
		\]
		Hence,
		\[
		\left\|\mathbb{E}\left[p^{2}\right]-E_{h}\left[p_{h}^{2}\right]\right\|_{L^{\infty_{(D)}}} \lesssim C\left\|p-p_{h}\right\|_{L^{\infty_{(D \times \Omega)}}}+\left\|p_h^2\right\|_{L^{\infty}(D \times \Omega)}\frac{\sigma N_y}{\sqrt{N}}~.
		\]
		For the second term, we have
		\[
		\begin{aligned}
			\left\|(\mathbb{E}[p])^{2}-\left(E_{h}\left[p_{h}\right]\right)^{2}\right\|_{L^{\infty}(D)}
			&=\left\|(\mathbb{E}[p_h]+E_{h}\left[p_{h}\right])
			(\mathbb{E}[p_h]-E_{h}\left[p_{h}\right])\right\|_{L^{\infty}(D)}\\
			&\leqslant C \left\| \mathbb{E}[p]-E_{h}\left[p_{h}\right] \right\|_{L^{\infty}(D)}~.    
		\end{aligned}
		\]
		Therefore,
		\[
		\left\|\mathbb{V}[p]-V_{h}\left[p_{h}\right]\right\|_{L^{\infty}(D)} \leqslant C_1^{'} \Delta x^{q}+C_2^{'} \Delta y^{r}+C_3^{'}\frac{\sigma N_y}{\sqrt{N}}.
		\]
	\end{proof}
	
	It can be seen that if the number of clusters $N_y$ is small, $m_k$ would be large, i.e.,
	\[
	m_k \geq \frac{N}{N_y},
	\]
	The term $\sigma N_y$ on the right-hand side. Theorem~\ref{thm1} refines the error decomposition proposed by \citet{Jin2017} by incorporating the error introduced by the discrete quadrature samples drawn from a specific distribution in the parameter space. Theorem~\ref{thm1} shows that the maximum error over the parameter space is governed by the physical domain discretization, the parameter space discretization, and the quadrature error. For both the expectation error and the variance error, the orders of convergence rate from these three terms are the same. 

	\subsection{Estimates in $L_{1}$ -norm}
	Let \( p_h^y \) be the numerical solution which is exact in $x$ variable and discretized in $y$ and by $p_h^{xy}$ the numerical discretized in both variables. Assume that 
	\[
	\begin{array}{c}
		\left\|p_{h}^{y}-p_{h}^{x y}\right\|_{L^{1}(D)} \leqslant C_{1} \Delta x^{q}, \ \ \forall y \in \Omega, \\
		\left\|p-p_{h}^{y}\right\|_{L^{1}(\Omega)} \leqslant C_{2} \Delta y^{r}, \ \ \forall x \in D,
	\end{array}
	\]
	where $\Delta x$ is the mesh size in the physical domain, $\Delta y$ can be viewed as the largest radius of the clusters, which is used to measure the discretization error in the parameter space. Then the following estimates are established.
	\begin{thm}
		The error of the expectation and variance are
		\[
		\begin{aligned}
			\left\|\mathbb{E}[p]-E_h\left[p_{h}^{xy}\right]\right\|_{L^{1}(D)}
			&\leqslant C_1 \Delta x^{q}+C \Delta y^{r}+C_3\frac{\sigma}{\sqrt{N}},\\
			\left\|\mathbb{V}[p]-V_{h}\left[p_{h}\right]\right\|_{L^{1}(D)} &\leqslant C_1^{'} \Delta x^{q}+C^{'} \Delta y^{r}+C_3^{'}\frac{\sigma}{\sqrt{N}}.
		\end{aligned}
		\]
	\end{thm}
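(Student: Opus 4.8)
The plan is to obtain both estimates from a triangle‑inequality decomposition that isolates the three independent error sources: the finite‑volume discretization in the physical variable $x$, the piecewise‑constant approximation over the clusters in the parameter variable $y$, and the Monte Carlo estimation of the cluster weights $\alpha_k$. Writing $p_h=p_h^{xy}$ for the fully discrete SFV–cluster solution and inserting the intermediate quantities $p_h^{y}$ and $p_h^{xy}$, I would split
\[
\mathbb{E}[p]-E_h[p_h^{xy}]=\bigl(\mathbb{E}[p]-\mathbb{E}[p_h^{y}]\bigr)+\bigl(\mathbb{E}[p_h^{y}]-\mathbb{E}[p_h^{xy}]\bigr)+\bigl(\mathbb{E}[p_h^{xy}]-E_h[p_h^{xy}]\bigr),
\]
and bound the three terms in the $L^{1}(D)$ norm separately.

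For the first term, Fubini's theorem together with the assumed parameter‑space estimate $\|p-p_h^{y}\|_{L^{1}(\Omega)}\le C_2\Delta y^{r}$ gives $\|\mathbb{E}[p]-\mathbb{E}[p_h^{y}]\|_{L^{1}(D)}\le\int_\Omega\bigl(\int_D|p-p_h^{y}|\,dx\bigr)\rho(y)\,dy\le C\Delta y^{r}$ (the factor $\|\rho\|_\infty$, or the convention that $L^1(\Omega)$ is taken with respect to $\rho\,dy$, being absorbed into $C$). For the second term, exchanging the order of integration and using $\|p_h^{y}-p_h^{xy}\|_{L^{1}(D)}\le C_1\Delta x^{q}$ uniformly in $y$ together with $\int_\Omega\rho\,dy=1$ yields exactly $\|\mathbb{E}[p_h^{y}]-\mathbb{E}[p_h^{xy}]\|_{L^{1}(D)}\le C_1\Delta x^{q}$. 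For the third term, since $p_{ik}^{n}$ is constant on each cluster one has $\mathbb{E}[p_h^{xy}]_i^n-E_h[p_h^{xy}]_i^n=\sum_k p_{ik}^{n}(\alpha_k-\tilde\alpha_k)$; combining the Monte Carlo bound $|\alpha_k-\tilde\alpha_k|\lesssim\sigma_k/\sqrt N$ with a uniform bound $|p_{ik}^{n}|\le M$ (from $L^{\infty}$‑stability / the discrete maximum principle of the FV scheme) and taking the $L^{1}(D)$ norm gives a contribution $\lesssim(M|D|/\sqrt N)\sum_k\sigma_k$, which is absorbed into $C_3\,\sigma/\sqrt N$ with $\sigma$ the appropriate aggregate of the $\sigma_k$. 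Adding the three contributions gives the expectation estimate.

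For the variance I would reduce to the expectation estimate applied both to $p$ and to $p^{2}$. Writing $\mathbb{V}[p]-V_h[p_h]=\bigl(\mathbb{E}[p^{2}]-E_h[p_h^{2}]\bigr)-\bigl((\mathbb{E}[p])^{2}-(E_h[p_h])^{2}\bigr)$ and using $a^{2}-b^{2}=(a-b)(a+b)$, the uniform bound $M$ controls the second piece by $2M\,\|\mathbb{E}[p]-E_h[p_h]\|_{L^{1}(D)}$, already estimated. For the first piece, the pointwise inequality $|p^{2}-(p_h^{y})^{2}|\le 2M\,|p-p_h^{y}|$, and the analogue for the $x$‑discretization error, transfers the assumed $\Delta x^{q}$ and $\Delta y^{r}$ estimates from $p$ to $p^{2}$ up to the constant $2M$, while the Monte Carlo term is handled as before with $p_{ik}^{n}$ replaced by $(p_{ik}^{n})^{2}$. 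Collecting terms produces the variance estimate with the primed constants.

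The step I expect to be the main obstacle is controlling the Monte Carlo contribution uniformly in the number of clusters $N_y$: the crude bound $\sum_k|p_{ik}^{n}|\,|\alpha_k-\tilde\alpha_k|$ carries a factor $\sum_k\sigma_k$ which a priori grows like $\sqrt{N_y}$ (by Cauchy--Schwarz, since $\sigma_k^{2}=\alpha_k(1-\alpha_k)$ and $\sum_k\alpha_k=1$), so some care is needed in defining how $\sigma$ in the statement aggregates the per‑cluster standard deviations and in keeping $C_3$ independent of $N_y$; this is exactly the point at which the earlier observation, that clusters with fewer points have smaller $\sigma_k$, is used. A secondary technical point is establishing the $L^{\infty}$ bound $M$ and the regularity transfer to $p^{2}$, which requires either a maximum‑principle argument for the parabolic and hyperbolic SFV schemes or an explicit boundedness assumption on the data.
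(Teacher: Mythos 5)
Your proposal follows essentially the same route as the paper: the identical three-term triangle-inequality decomposition (parameter discretization, physical discretization, Monte Carlo weight error) for the expectation, and the same reduction of the variance via $a^{2}-b^{2}=(a-b)(a+b)$ together with a boundedness assumption on $p$ and $p_h$. The one obstacle you flag --- that the crude bound $\sum_k|p_{ik}^{n}|\,|\alpha_k-\tilde\alpha_k|$ aggregates the per-cluster deviations $\sigma_k$ in a way that could grow with the number of clusters --- is a genuine issue, but the paper does not resolve it either: it simply replaces all $\sigma_k$ by a single $\sigma$ and absorbs $\sum_k|p_{ik}^{n}|$ into $\|p_h^{xy}\|_{L^{1}(D\times\Omega)}$ without further comment.
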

	\begin{proof}
		\[
		\begin{aligned}
			\left\|\mathbb{E}[p]-E_h\left[p_{h}^{xy}\right]\right\|_{L^{1}(D)}&=\left\|\mathbb{E}[p]-\mathbb{E}\left[p_{h}^{ y}\right]+\mathbb{E}\left[p_{h}^{ y}\right]-\mathbb{E}\left[p_{h}^{x y}\right]+\mathbb{E}\left[p_{h}^{x y}\right]-E_h\left[p_{h}^{x y}\right]\right\|_{L^{1}(D)} \\
			&\leqslant\left\|\mathbb{E}[p]-\mathbb{E}\left[p_{h}^{y}\right]\right\|_{L^{1}(D)}+\left\|\mathbb{E}\left[p_{h}^{y}\right]-\mathbb{E}\left[p_{h}^{xy}\right]\right\|_{L^{1}(D)}\\
			&\ \ \ +\left\|\mathbb{E}\left[p_{h}^{xy}\right]-E_h\left[p_{h}^{x y}\right]\right\|_{L^{1}(D)}~.
		\end{aligned}
		\]
		For the first integral, we have
		\[
		\begin{aligned}
			\left\|\mathbb{E}[p]-\mathbb{E}\left[p_{h}^{y}\right]\right\|_{L^{1}(D)}&=\int_{D}\left|\int_{\Omega}\left(p-p_{h}^{y}\right) \rho(y) dy\right| d x
			\leqslant \int_{D} \sup_{\Omega} \rho(y) \int_{\Omega}\left|p-p_{h}^{y}\right| d y d x\\
			&=\sup_{\Omega} \rho(y)\int_{D} \left\|p-p_{h}^{y}\right\|_{L^{1}(\Omega)} dx
			\leqslant \sup_{\Omega} \rho(y)|D|C_2 \Delta y^{r}\leqslant C\Delta y^{r},
		\end{aligned}
		\]
		where $|D|$ is the area of the physical domain.
		Similarly, for the second integral, we have
		\[
		\begin{aligned}
			\left\|\mathbb{E}\left[p_{h}^{y}\right]-\mathbb{E}\left[p_{h}^{xy}\right]\right\|_{L^{1}(D)}&=\int_{D}\left|\int_{\Omega}\left(p_{h}^{y}-p_{h}^{x y}\right) \rho(y) d y\right| dx\\
			&=\int_{\Omega}\left[\int_{D}\left|p_{h}^{y}-p_{h}^{x y}\right| d x\right] \rho(y) d y\\
			&=\int_{\Omega} \left\|p_{h}^{y}-p_{h}^{x y}\right\|_{L^{1}(D)} \rho(y) d y\\
			&\leqslant C_{1} \Delta x^{q}\int_{\Omega} \rho(y) d y\leqslant C_{1} \Delta x^{q}~.
		\end{aligned}
		\]
		The third term can be estimated as
		\[
		\begin{aligned}
			\left|\mathbb{E}[p_h]_i^n-E_{h}\left[p_{h}\right]_i^n\right|&=\left|\sum_k p_{i k}^{n} ({\alpha_k}-{\tilde \alpha_k}) \right|\\
			&\leqslant \frac{\sigma}{\sqrt{N}}\sum_k \left|p_{ik}^n\right|,
		\end{aligned}
		\]
		then
		\[
		\left\|\mathbb{E}\left[p_{h}^{xy}\right]-E_h\left[p_{h}^{x y}\right]\right\|_{L^{1}(D)}
		\leqslant \frac{\sigma}{\sqrt{N}}\left\|p_h^{xy}  \right\|_{L^{1}(D \times \Omega)}~.
		\]
		Hence
		\[
		\left\|\mathbb{E}[p]-E_h\left[p_{h}^{xy}\right]\right\|_{L^{1}(D)}
		\leqslant C_1 \Delta x^{q}+C \Delta y^{r}+C_3\frac{\sigma}{\sqrt{N}}~.
		\]
		For the variance,
		\[
		\begin{aligned}
			\left\|\mathbb{V}[p]-V_{h}\left[p^{xy}_{h}\right]\right\|_{L^{1}(D)} &\leqslant \left\|\mathbb{E}\left[p^{2}\right]-E_h\left[\left(p_{h}^{x y}\right)^{2}\right]\right\|_{L^{1}(D)}+\left\|(\mathbb{E}[p])^{2}-\left(E_h\left[p_{h}^{x y}\right]\right)^{2}\right\|_{L^{1}(D)}, \\
			&\leqslant \left\|\mathbb{E}\left[p^{2}\right]-\mathbb{E}\left[\left(p_{h}^{x y}\right)^{2}\right]\right\|_{L^{1}(D)}+\left\|\mathbb{E}\left[\left(p_{h}^{x y}\right)^{2}\right]-E_h\left[\left(p_{h}^{x y}\right)^{2}\right]\right\|_{L^{1}(D)}\\
			&\ \ \ +\left\|(\mathbb{E}[p])^{2}-\left(E_h\left[p_{h}^{x y}\right]\right)^{2}\right\|_{L^{1}(D)}~.
		\end{aligned}
		\]
		The first term can be estimated by
		\[
		\begin{aligned}
			\left\|\mathbb{E}\left[p^{2}\right]-\mathbb{E}\left[\left(p_{h}^{x y}\right)^{2}\right]\right\|_{L^{1}(D)} 
			&=\int_{D}\left|\int_{\Omega}\left[p^{2}-\left(p_{h}^{x y}\right)^{2}\right] \rho(y) d y\right| d x\leqslant \int_{D} \int_{\Omega}\left|p^{2}-\left(p_{h}^{x y}\right)^{2}\right| \rho(y) d y d x\\
			& =\int_{D} \int_{\Omega}\left|p-p_{h}^{x y}\right|\left|p+p_{h}^{x y}\right| \rho(y) d y dx\leqslant C \int_{D} \int_{\Omega}\left|p-p_{h}^{x y}\right| \rho(y) d y d x\\
			&\leqslant C \int_{D} \int_{\Omega}\left|p-p_{h}^{y}\right| \rho(y) d y d x+C \int_{D} \int_{\Omega}\left|p_h^y-p_{h}^{xy}\right| \rho(y) d y d x\\
			&\leqslant C \sup_{\Omega} \rho(y)\int_{D} d x \int_{\Omega}\left|p-p_{h}^{y}\right| d y 
			+C\int_{\Omega} \rho(y) d y \int_{D} \left|p_h^y-p_{h}^{xy}\right|d x\\
			&=C \sup_{\Omega} \rho(y)\int_{D} \left\|p-p_{h}^{y}\right\|_{L^1(\Omega)}d x+C\int_{\Omega} \left\|p_h^y-p_{h}^{xy}\right\|_{L^1(D)}\rho(y) d y\\
			&\leqslant C\sup_{\Omega}\rho(y)|D|C_2\Delta y^{r}+CC_1\Delta x^{q}~.
		\end{aligned}
		\]
		For the second integral, we have
		\[
		\begin{aligned}
			\left|\mathbb{E}\left[(p_h^{xy})^{2}\right]_i^n-E_{h}\left[(p_{h}^{xy})^{2}\right]_{i}^{n}\right|&=\left| \int_{\Omega} \sum_{k}\left(p_{i k}^{n}\right)^{2} \rho(y) d y-\sum_{k}\left(p_{i k}^{n}\right)^{2} {\tilde\alpha_k} \right| \\
			&\leqslant \sum_{k}\left(p_{i k}^{n}\right)^{2} \left|\left(\int_{\omega_{k}} \rho(y) d y -{\tilde\alpha_k}\right) \right| \\
			&\leqslant \frac{\sigma}{\sqrt{N}}\sum_{k}\left(p_{i k}^{n}\right)^{2}~,
		\end{aligned}
		\]
		hence
		\[
		\left\|\mathbb{E}\left[\left(p_{h}^{x y}\right)^{2}\right]-E_h\left[\left(p_{h}^{x y}\right)^{2}\right]\right\|_{L^{1}(D)} \leqslant  \left\|(p_h^{xy})^2\right\|_{L^{1}(D \times \Omega)}\frac{\sigma}{\sqrt{N}}\leqslant C\frac{\sigma}{\sqrt{N}}.
		\]
		For the third integral, we get
		\[
		\begin{aligned}
			\left\|(\mathbb{E}[p])^{2}-\left(E_h\left[p_{h}^{x y}\right]\right)^{2}\right\|_{L^{1}(D)} &=\int_{D}\left|(\mathbb{E}[p])^{2}-\left(E_h\left[p_{h}^{x y}\right]\right)^{2}\right| d x\\
			&=\int_{D}\left|\mathbb{E}[p]-E_h\left[p_{h}^{x y}\right]\right|\left|\mathbb{E}[p]+E_h\left[p_{h}^{x y}\right]\right| d x,\\
			&\leqslant C\left\|\mathbb{E}[p]-E_h\left[p_{h}^{x y}\right]\right\|_{L^{1}(D)},
		\end{aligned}~.
		\]
		Finally,
		\[
		\left\|\mathbb{V}[p]-V_{h}\left[p^{xy}_{h}\right]\right\|_{L^{1}(D)} \leqslant C_1^{'} \Delta x^{q}+C^{'} \Delta y^{r}+C_3^{'}\frac{\sigma}{\sqrt{N}}~.
		\]
	\end{proof}

	\section{Test Cases}
	\subsection{The Kraichnan-Orszag three-mode problem with 1D Random Parameter}
	In this test case, we consider the Kraichnan-Orszag three-mode problem governed by Eq.~\eqref{K-O} where $y$ is a uniformly distributed random variable in [-1,1]. The initial conditions for the dependent variables $z_1, z_2$ and $z_3$ are
	\begin{equation}
		z_1(0) = 1.0,\quad z_2(0) = 0.1y,\quad z_3(0) = 0.0,
		\label{eq:initial_conditions}
	\end{equation}
	
	The exact solution is given as in \citet{wan2005}. The quasi-Monte Carlo (QMC) method using SOBOL sequence is adopted to generate samples of $y$. For SFV-Cartesian, the interval [-1, 1] is divided into a Cartesian grid corresponding to the desired number of clusters. For SFV, the cells in the parameter space are built by clustering the QMC samples using K-means. The solutions on grid cells or clusters are used to calculate the expectation and variance of solution.
	
	The QMC solution obtained from $2^{17}$ samples is employed to calculate the reference expectation and variance for validation. It is observed in Fig.~\ref{experiment_clusters} that both the errors of expectation and variance gradually decrease over number of clusters for fixed number of samples. The convergence rate of the expectation and variance are found to be close to each other via data fitting, corresponding to the \(r\) in Theorem 4.2. This is in accordance with Theorem 4.2. 
	
	\begin{figure}[!htp]
		\centering
		\includegraphics[width=0.9\linewidth]{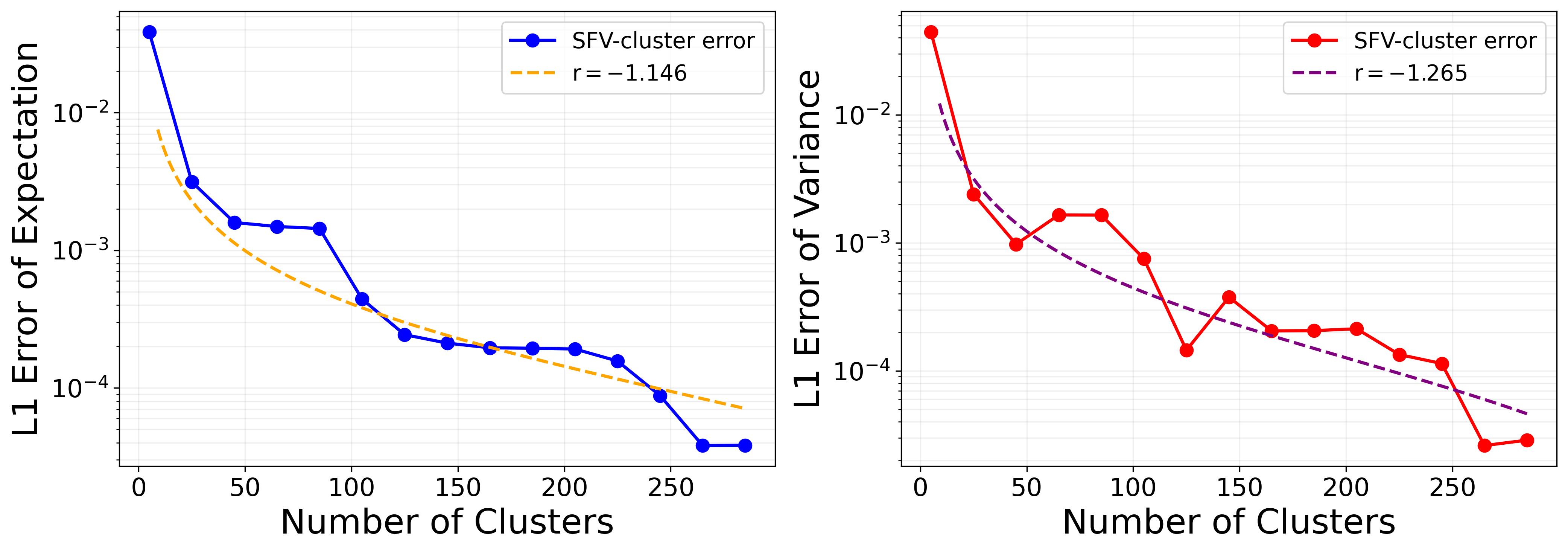}
		\caption{The convergence of the SFV-cluster method for the Kraichnan–Orszag three-mode problem over  number of clusters: expectation (left) and variance (right). The convergence rate of the expectation and variance are close to each other which is in accordance with Theorem 4.2.}
		\label{experiment_clusters}
	\end{figure}
	
	The convergence of the SFV and the SFV-cluster are compared in Fig.~\ref{1experiment_compare_sfv_structured}. The convergence rate are approximately identical for this simple 1D problem. 
	A comparison between the SFV-cluster method and QMC is shown in Fig.~\ref{experiment_compare_sfv_vs_mc}. The errors of expectation and variance obtained by SFV-cluster are significantly lower than those of QMC given equivalent number of forward simulation runs.  In addition, the convergence rate of SFV-cluster is higher than that of QMC. 
	\begin{figure}[!htp]
		\centering
		\includegraphics[width=0.9\linewidth]{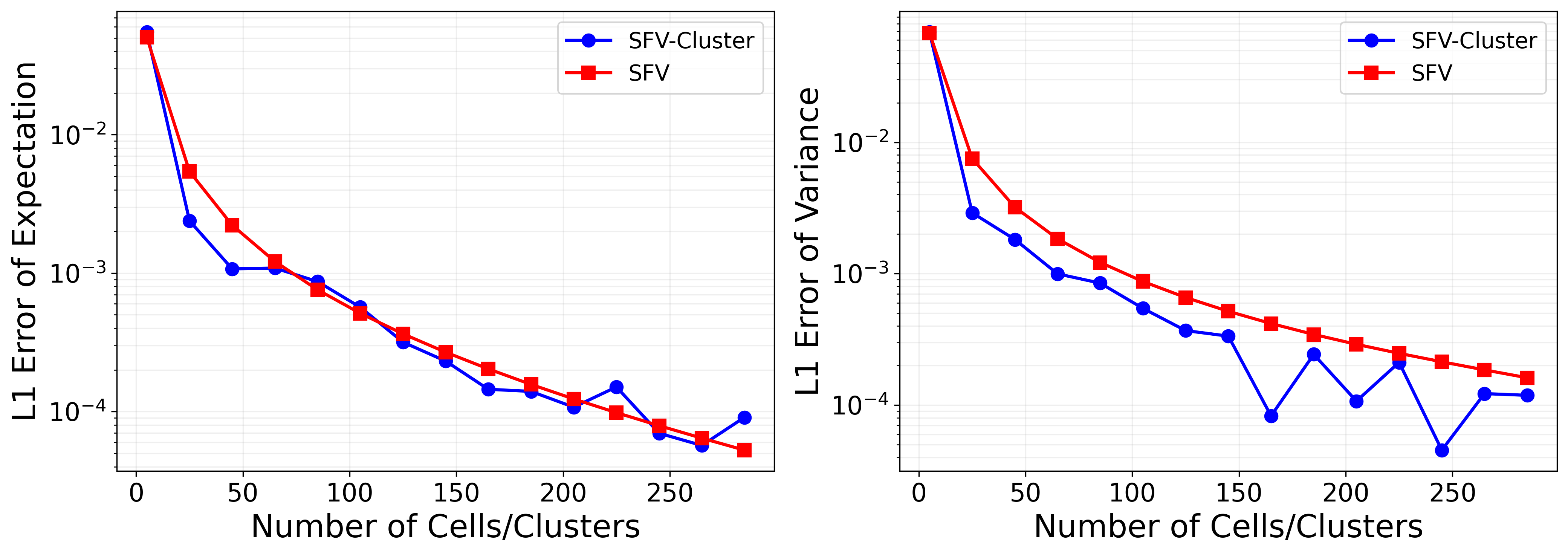}
		\caption{The convergence of the mean (left) and variance (right) of L1 error computed by SFV and SFV-cluster.}
		\label{1experiment_compare_sfv_structured}
	\end{figure}
	\begin{figure}[!htp]
		\centering
		\includegraphics[width=0.9\linewidth]{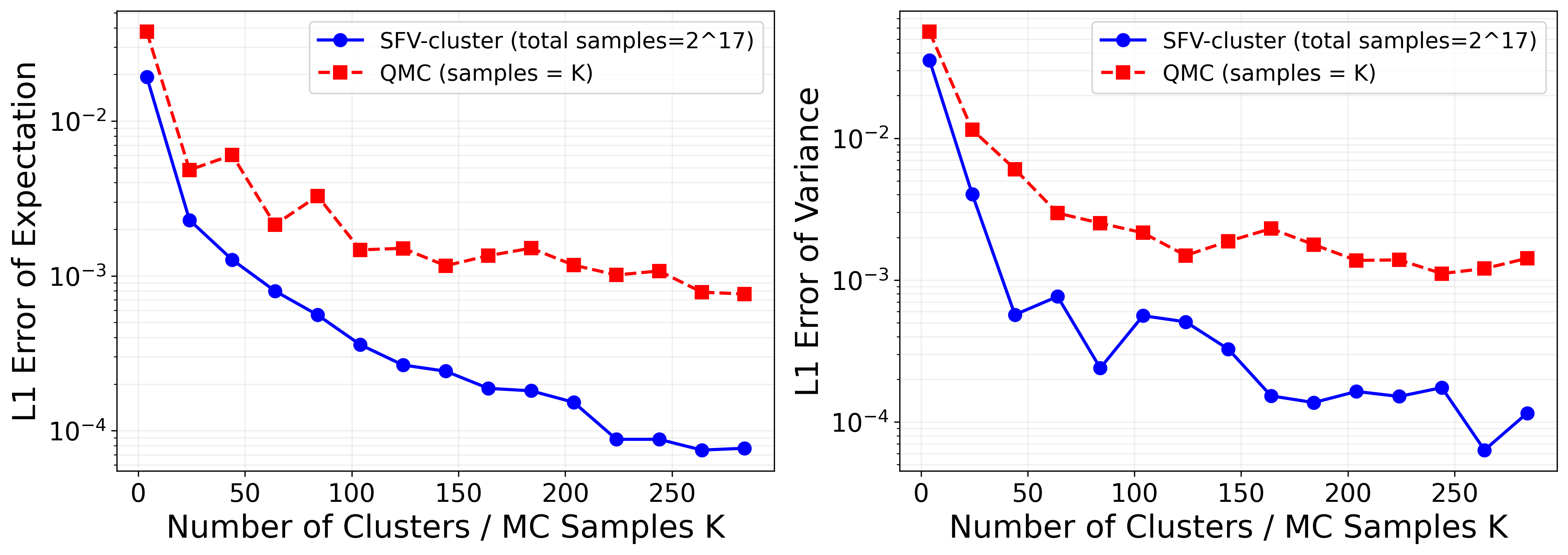}
		\caption{The error of expectation computed by SFV-cluster and QMC (left) and the corresponding error of variance (right). This implies that the result of SFV-cluster is more accurate than QMC given identical cost of forward simulation runs.}
		\label{experiment_compare_sfv_vs_mc}
	\end{figure}
	
	\subsection{The Kraichnan-Orszag three-mode problem with 2D Random Parameter}
	In this test case, we consider the Kraichnan-Orszag three-mode problem governed by Eq.~\eqref{K-O} where \(y_1\) is discontinuous on \([-1,1]\) with probability density function defined by Eq.~\eqref{pdf} with the constant \(M=11\) for normalization, and \(y_2\) is a uniform random variable in \([-1,1]\).
	
	\begin{equation}
		f(y_1) = \frac{1}{M} \times
		\begin{cases}
			\dfrac{1 + \cos(\pi y_1)}{2} & \text{if } y_1 \in [-1, 0], \\
			10 + \dfrac{1 + \cos(\pi y_1)}{2} & \text{if } y_1 \in [0, 1], \\
			0 & \text{else}
			\label{pdf}
		\end{cases}
	\end{equation}
	
	\begin{equation}
		z_1(0) = 1.0,\quad z_2(0) = 0.1y_1,\quad z_3(0) = y_2,
		\label{eq:initial_conditions}
	\end{equation}
	
	The QMC solution obtained from $2^{17}$ samples is employed to calculate the reference expectation and variance for validation. The convergence of errors for the expectation and variance approximated by SFV-cluster is shown in Fig.~\ref{2experiment_clusters_english}. The approximated convergence rates for the expectation and variance are close to each other, which is in accordance with Theorem 4.2. Using five randomly chosen clusters, the projected solution trajectories are close to the reference \citep{wan2005} (see Fig.~\ref{2clusters_100_multiple_cluster_3d_trajectories} and Table.~\ref{tab:cluster_errors}). 
	
	\begin{figure}[!htp]
		\centering
		\includegraphics[width=0.9\linewidth]{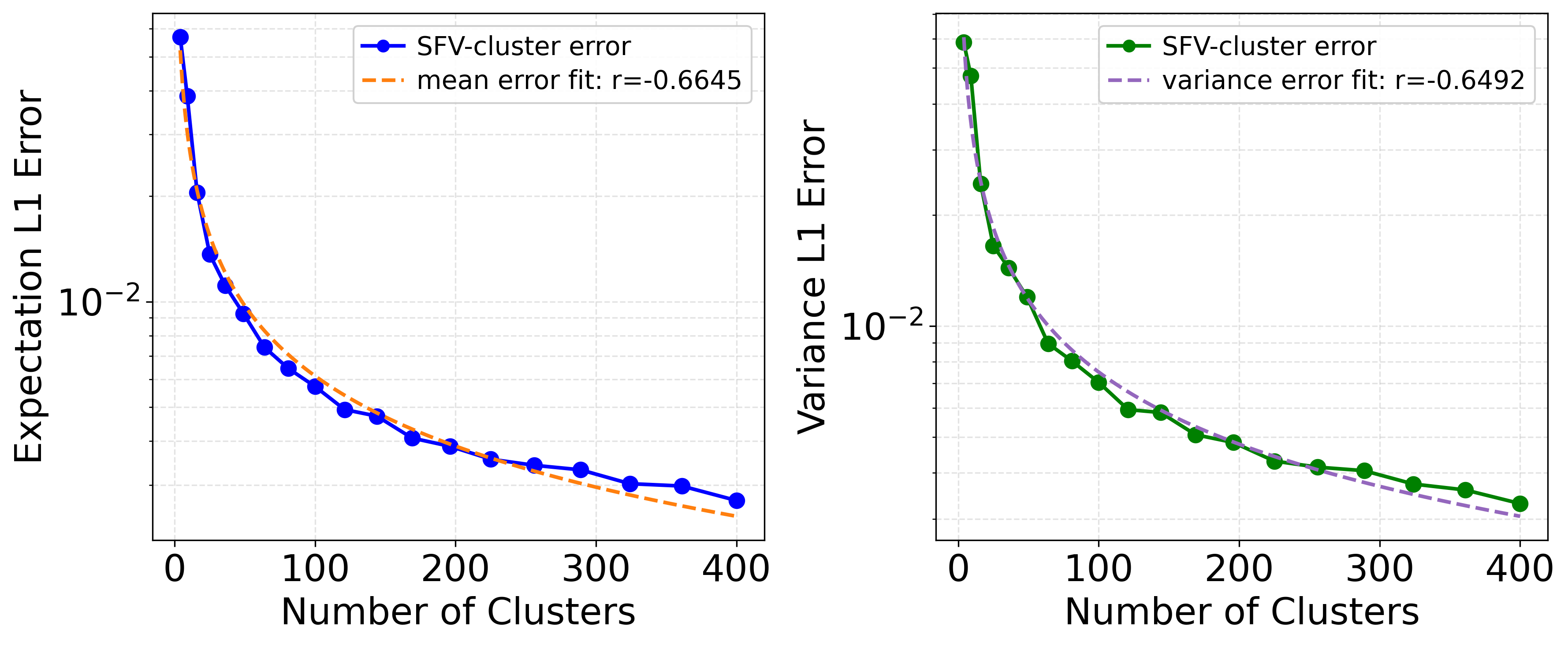}
		\caption{The variation of the error of the SFV-cluster method for the Kraichnan-Orszag three-mode problem with 2D random parameters as a function of the number of clusters: the mean of the error (left) and the variance of the error (right)}
		\label{2experiment_clusters_english}
	\end{figure}

	\begin{figure}[!htp]
		\centering
		\includegraphics[width=0.99\linewidth]{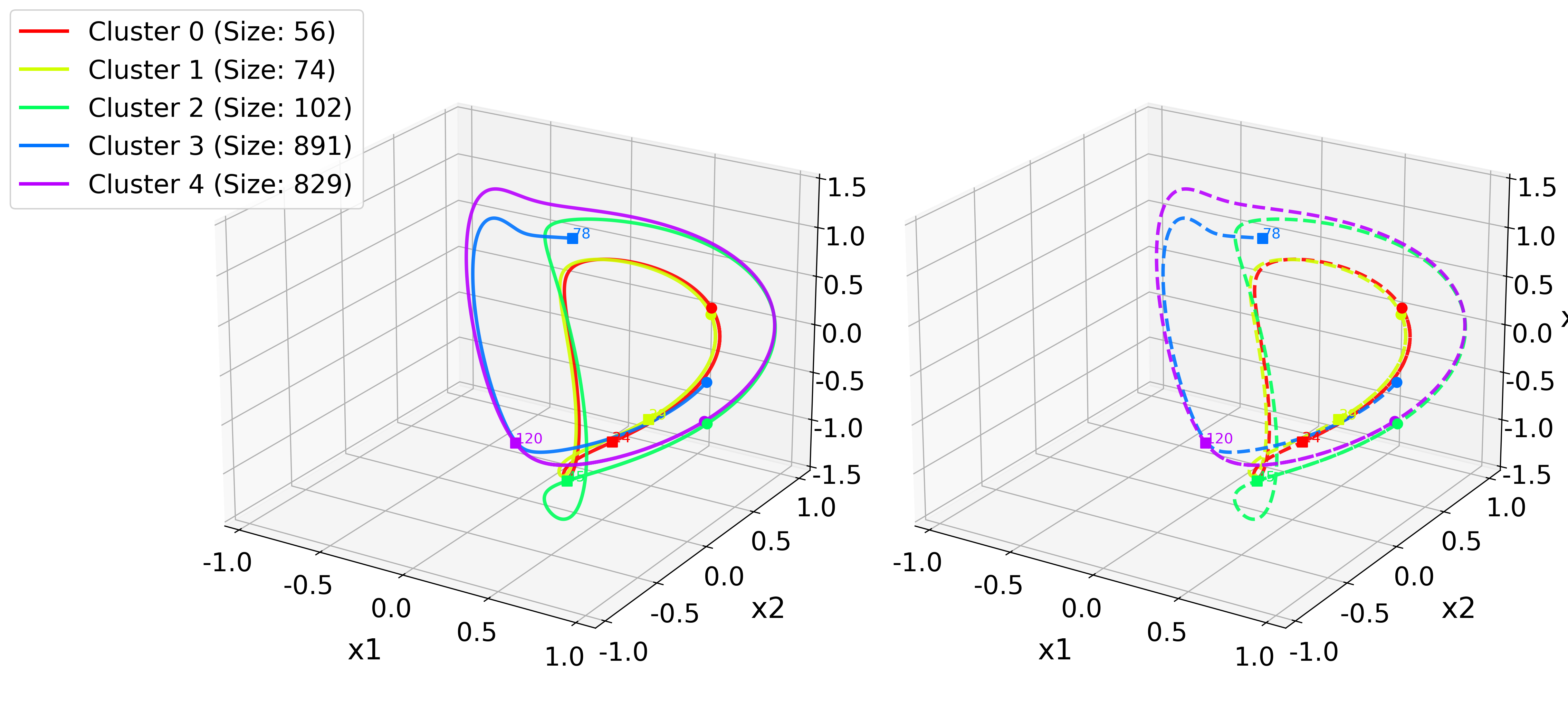}
		\caption{The solutions of the Kraichnan–Orszag problem with 2D random parameters obtained by the SFV-cluster method under different initial conditions. Numerical solution  (left); Analytic solution  (right)}
		\label{2clusters_100_multiple_cluster_3d_trajectories}
	\end{figure} 
	\begin{table}[htbp]
		\centering
		\caption{Errors between the SFV-cluster and analytic solutions for each cluster chosen randomly.}
		\label{tab:cluster_errors}
		\begin{tabular}{ccc}
			\toprule
			Cluster index & Mean absolute error ($\bar{e}$) & Max norm error ($e_{\infty}$) \\
			\midrule
			0 & $3.99\times10^{-7}$ & $1.94\times10^{-6}$ \\
			1 & $2.77\times10^{-7}$ & $1.83\times10^{-6}$ \\
			2 & $4.17\times10^{-7}$ & $3.31\times10^{-6}$ \\
			3 & $1.13\times10^{-7}$ & $7.37\times10^{-7}$ \\
			4 & $1.25\times10^{-7}$ & $8.11\times10^{-7}$ \\
			\bottomrule
		\end{tabular}
	\end{table}
	\begin{figure}[!htp]
		\centering
		\includegraphics[width=0.99\linewidth]{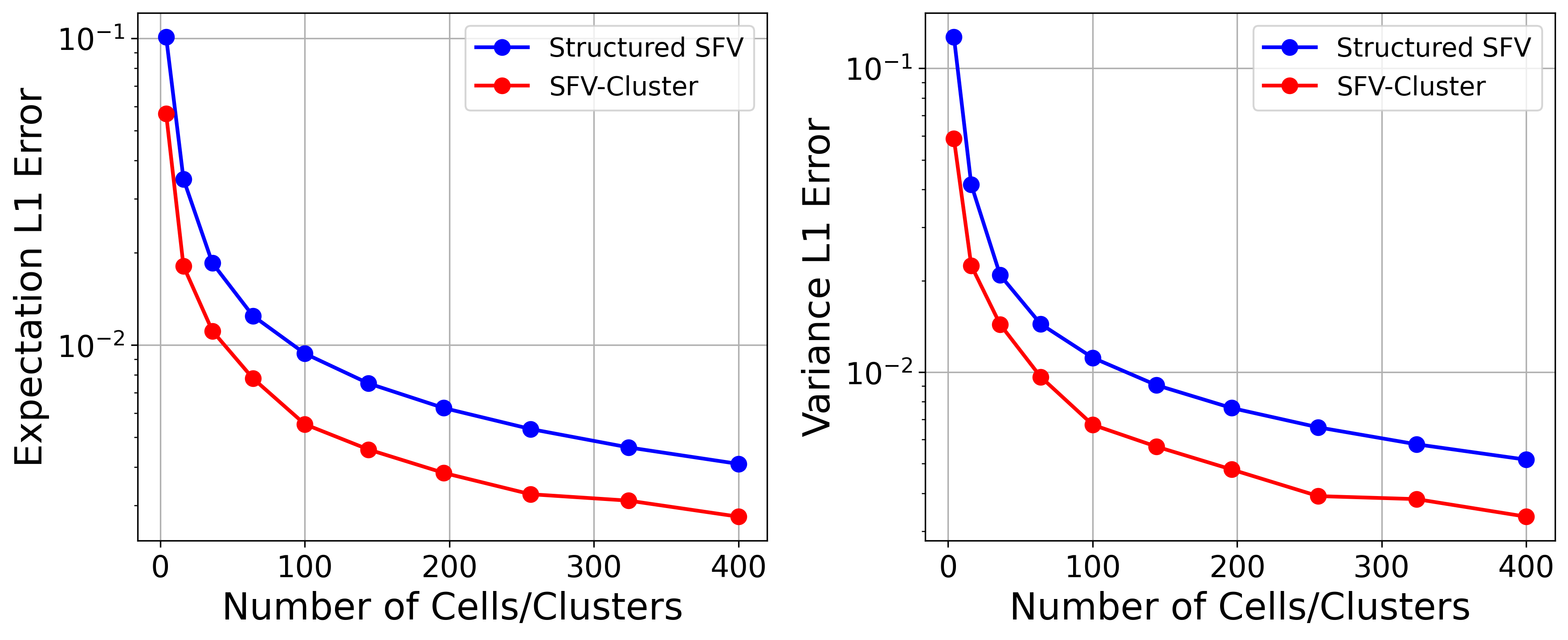}
		\caption{The mean of error computed by SFV and SFV-cluster (left) and the variance of error computed by SFV and SFV-cluster (right).}
		\label{2method_comparison_english}
	\end{figure} 
	Under a fixed sample size, the variations of errors for the SFV and SFV-cluster methods over the number of cells are illustrated in Fig.~\ref{2method_comparison_english}. The convergence of SFV-cluster is faster than that of SFV using Cartesian grid. The trajectories of expectation and variance over time obtained by QMC, SFV, and SFV-cluster method are compared in Fig.~\ref{2methods_comparison_units_100}, validating the accuracy of SFV-cluster. 
	\begin{figure}[!htp]
		\centering
		\includegraphics[width=0.99\linewidth]{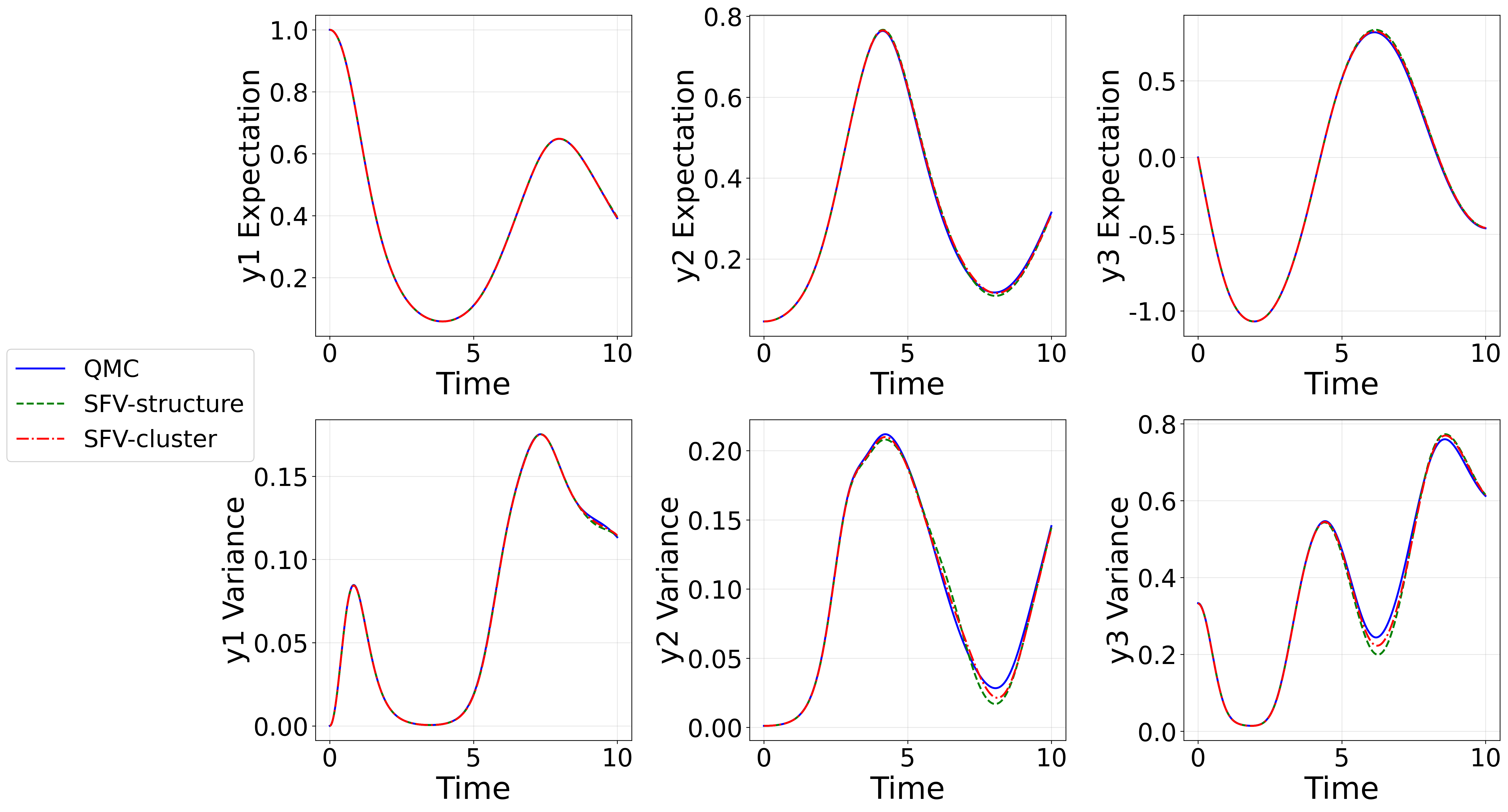}
		\caption{Comparison of expectation and variance for \( y_1, y_2, y_3 \) estimated by QMC, SFV-structure, and SFV-cluster for the Kraichnan–Orszag three‑mode problem with 2D random parameters.}
		\label{2methods_comparison_units_100}
	\end{figure} 
	
	\subsection{Buckley-Leverett Equation with 5D Random Parameter}
	In this test case, we consider the Buckley-Leverett example governed by Eq.~\eqref{buckley_leverett} with 5D random parameter. The physical domain $x \in [0,6]$ is uniformly partitioned into five segments. The random variable $\alpha(\omega)$ is defined piecewise over these segments. On each segment, $\alpha(\omega)$ follows a uniform distribution within each interval of $[0,0.15]$, $[0.25,0.55]$, $[0.65,0.75]$, $[0.85,1.05]$, and $[1.15,1.55]$, respectively. The five dimensions of $\alpha(\omega)$ are independent.

	
	For the physical domain $[0,6]$, there are 200 grid cells. 
	QMC with $2^{17}$ samples is employed to obtain the reference expectation and variance. The solutions of SFV, SFV-cluster and QMC at time $t=1$ are compared. The convergence of expectation and variance for SFV-cluster over number of clusters is shown in Fig.~\ref{experiment_vary_clusters_simplified}. The spatial profiles of the expectation with respect to x computed by QMC and SFV-cluster are shown in Fig.~\ref{final_time_comparison_t_-1}. The nonlinear flux $f(u;y)$ and the spatially varying random parameter $\alpha(y)$ lead to different local wave speeds, producing sharp fronts in the solution profile. The close agreement between the QMC and SFV-cluster profiles in these rapidly varying regions indicates that the SFV-cluster method accurately reproduces the numerical solution of the Buckley-Leverett equation. The SFV method based on structured grid is not included in comparison as generating structured grid in a 5D parameter space leads to very high number of cells that is computationally expensive for SFV.
	
	\begin{figure}[!htp]
		\centering
		\includegraphics[width=0.99\linewidth]{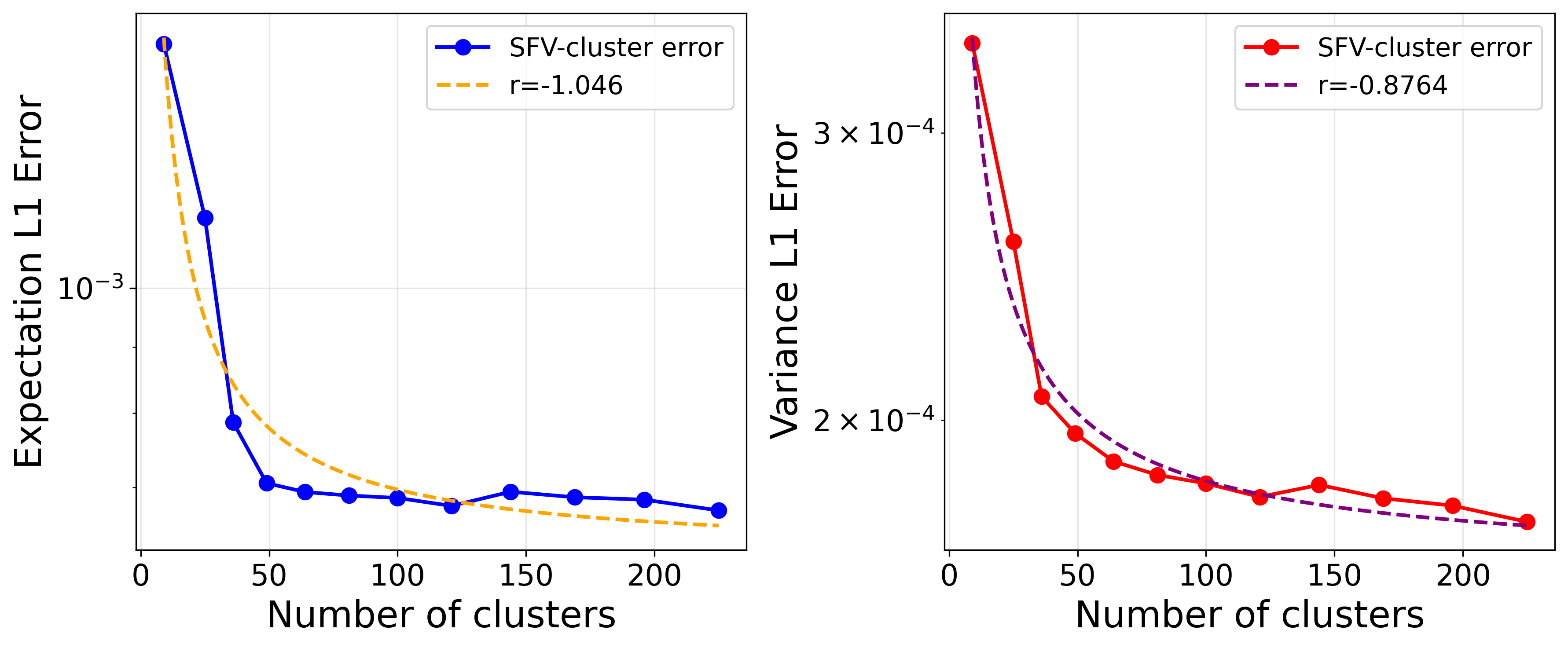}
		\caption{The convergence of the expectation (left) and variance (right) for SFV-cluster in the Buckley-Leverett test case over the number of clusters.}
		\label{experiment_vary_clusters_simplified}
	\end{figure}
	
	\begin{figure}[!htp]
		\centering
		\includegraphics[width=0.75\linewidth]{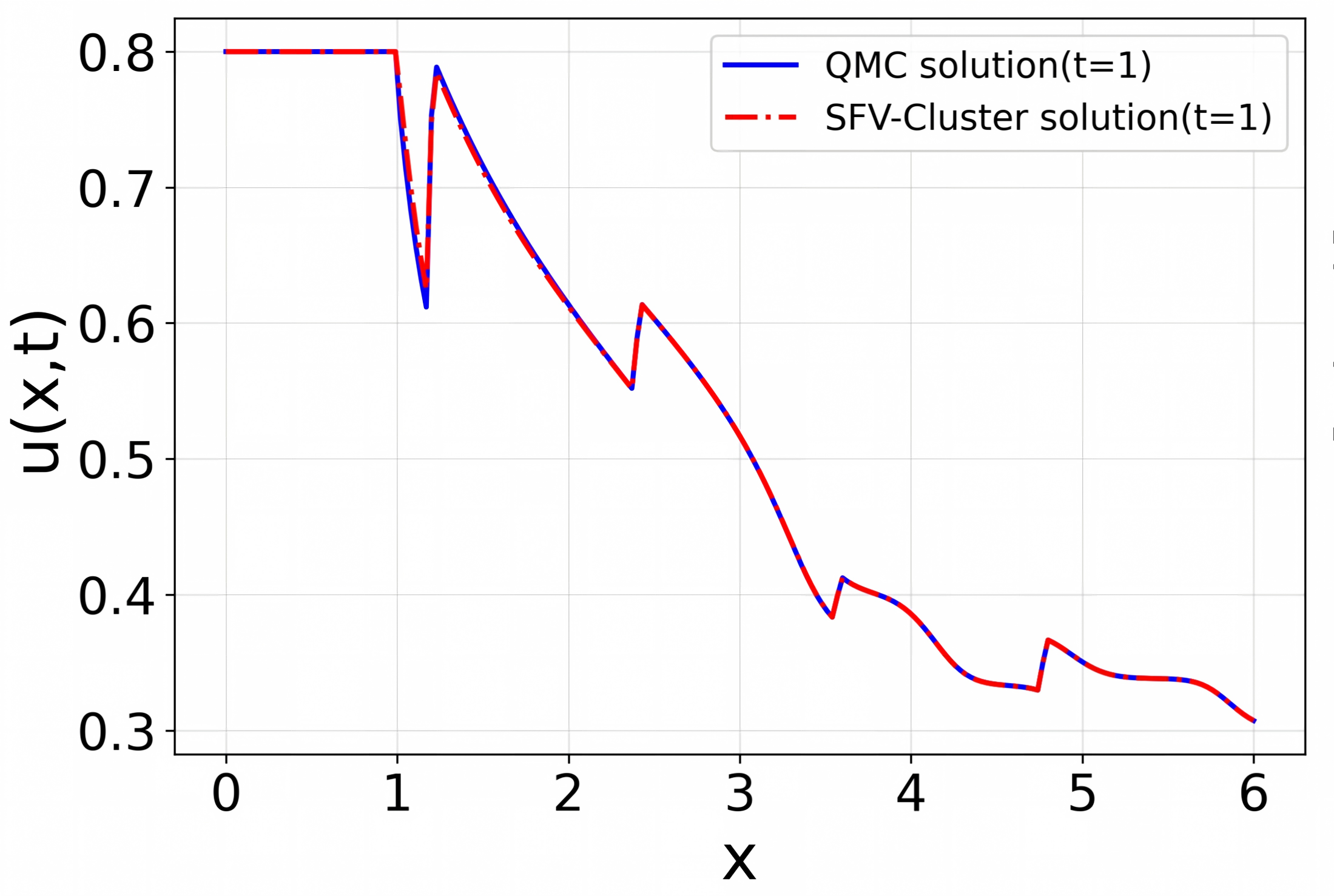}
		\caption{Comparing solutions at $t=1$ by the QMC method and the SFV-cluster method for the Buckley–Leverett equation with a 5D random parameter.}
		\label{final_time_comparison_t_-1}
	\end{figure}

	\newpage
	\section{Conclusions}
	In the current study,  we propose a new SFV-cluster scheme for forward UQ where samples in the parameter space are clustered to form cells with implicit boundaries. Since there is no flux between cells in the parameter space assuming first-order SFV, the cells are decoupled entirely. Therefore, unstructured mesh with implicit boundaries can be built in the parameter space by clustering samples. Employing the new SFV scheme, we only need to conduct forward simulation on the clusters, thus reducing the number of simulation runs for estimating the mean and variance compared to QMC. The new scheme has been validated on ODE and hyperbolic PDE problems with random parameters. Although K-means can be used to for clustering in parameter spaces with dimension higher than 3, and there is no explicit limit on the number of dimensions, caution is needed for applying K-means to much higher dimensions, as Euclidean distance gradually becomes ineffective as the number of dimensions grows higher. In such cases, more effective clustering or dimensional reduction methods could be employed.
	\section*{Data Availability}
	The data that support the findings of this study are available from the corresponding author upon reasonable request.
	
	\section*{Acknowledgements}
	The research is supported by the Natural Science Foundation of Shandong Province (No. ZR2024MA057), the Natural Science Foundation of Jiangsu Province (No. BK20220272), Qingdao Science and Technology Bureau (23-1-2-qljh-3-gx), the Fundamental Research Funds for the Central Universities and the Future Plan for Young Scholars of Shandong University. 
	\newpage

	
	
	\bibliographystyle{elsarticle-harv} 
	\bibliography{mybib}
	
	
	
	\newpage

\end{document}